\newcommand\smvee{\raise0.9ex\hbox{$\scriptscriptstyle\vee$}}
\def\oversortoftilde#1{\mathop{\vbox{\m@th\ialign{##\crcr\noalign{\kern3\p@}%
      \sortoftildefill\crcr\noalign{\kern3\p@\nointerlineskip}%
      $\hfil\displaystyle{#1}\hfil$\crcr}}}\limits}
\def\sortoftildefill{$\m@th \setbox\z@\hbox{$\braceld$}%
  \braceld\leaders\vrule \@height\ht\z@ \@depth\z@\hfill\braceru$}
    \DeclareFontFamily{U}{wncy}{}
    \DeclareFontShape{U}{wncy}{m}{n}{<->wncyr10}{}
    \DeclareSymbolFont{mcy}{U}{wncy}{m}{n}
    \DeclareMathSymbol{\Sh}{\mathord}{mcy}{"58}
\DeclareMathOperator{\Norm}{Norm}
\DeclareMathOperator{\Ker}{Ker}
\DeclareMathOperator{\Q}{\mathbf{Q}}
\DeclareMathOperator{\Z}{\mathbf{Z}}
\DeclareMathOperator{\C}{\mathbf{C}}
\DeclareMathOperator{\Gal}{Gal}
\DeclareMathOperator{\Spec}{Spec}
\DeclareMathOperator{\SL}{SL}
\DeclareMathOperator{\rk}{rk}
\newcommand{\cf}{\textit{cf. }}
\newcommand{\ie}{\textit{i.e. }}
\theoremstyle{definition}
\newtheorem{rems}{Remarks}[section]
\theoremstyle{plain}
\newtheorem{thm}{Theorem}[section]
\newtheorem{lem}[thm]{Lemma}
\newtheorem{prop}[thm]{Proposition}
\title{Level compatibility in Sharifi's conjecture}
\author{Emmanuel Lecouturier and Jun Wang}
\begin{document}
\maketitle

\begin{abstract} 
Sharifi has constructed a map from the first homology of the modular curve $X_1(M)$ to the $K$-group $K_2(\Z[\zeta_M, \frac{1}{M}])$, where $\zeta_M$ is a primitive $M$th root of unity. We study how these maps relate when $M$ varies. Our method relies on the techniques developed by Sharifi and Venkatesh. 
\end{abstract}
\tableofcontents

\section{Introduction and notation}\label{Section_introduction}

Sharifi \cite{Sharifi_annals} has constructed a beautiful and explicit map between modular symbols and a cyclotomic $K$-group. This map is conjecturally annihilated by a certain Eisenstein ideal. This conjecture, despite its apparent simplicity, turns out to be highly non-trivial and has lead to much work in recent years, in particular by Fukaya--Kato \cite{Fukaya_Kato} and more recently by Sharifi--Venkatesh \cite{Sharifi_Venkatesh}. This paper is devoted to the study of certain norm relations satisfied by Sharifi's map. This aspect has been studied before by Fukaya--Kato and Scott \cite{Scott}. Their results are however quite restrictive. We use the techniques developed by Sharifi and Venkatesh to remove most of these restrictions. Our main motivation is to apply the results of the present note to obtain results toward the Birch and Swinnerton--Dyer conjecture in the ``Eisenstein'' case \cite{BSD_Eisenstein}. We now set up some notation and describe our results in details.

Fix an algebraic closure $\overline{\Q}$ of $\Q$. For any integer $M \geq 4$, choose a primitive $M$th root of unity $\zeta_M \in \overline{\Q}$ such that for all $M' \mid M$, we have $\zeta_{M'} = \zeta_M^{M/M'}$. Let $\Gamma_1(M) = \{\begin{pmatrix} a & b \\ c & d \end{pmatrix} \in \SL_2(\Z) \text{ such that } a-1 \equiv c \equiv 0 \text{ (modulo } M\text{)}\}$ and denote by $X_1(M)$ the compact modular curve (over $\C$) of level $\Gamma_1(M)$. Let $C_M = \Gamma_1(M) \backslash \mathbf{P}^1(\Q)$ be the set of cusps of $X_1(M)$, and $C_M^0$ be those cusps in $C_M$ of the form $\Gamma_1(M) \cdot \frac{a}{b}$ with $\gcd(a,b)=1$ and $a \not\equiv 0  \text{ (modulo } M \text{)}$ (in the case $b=0$ we have the cusp $\Gamma_1(M)\cdot \infty$). Let $H_1(X_1(M), C_M, \Z)$ be the singular homology of $X_1(M)$ relative to $C_M$. If $\alpha$ and $\beta$ are in $\mathbb{P}^1(\Q)$, let $\{\alpha, \beta\}$ be the class in $H_1(X_1(M), C_M, \Z)$ of the hyperbolic geodesic from $\alpha$ to $\beta$ in $X_1(M)$.

Let $\xi_M : \Z[\Gamma_1(M) \backslash \SL_2(\Z)] \rightarrow H_1(X_1(M), C_M, \Z)$ be the (modified) Manin map: it sends a coset $\Gamma_1(M) \cdot \begin{pmatrix} a & b \\ c & d \end{pmatrix}$ to $\{-\frac{d}{Mb}, -\frac{c}{Ma}\}$ (it is the usual Manin map composed with the Atkin--Lehner involution $W_M$). Manin showed that $\xi_M$ is surjective. Let $S_M^0 \subset \Gamma_1(M) \backslash \SL_2(\Z)$ be the subset consisting of $\Gamma_1(M) \cdot \begin{pmatrix} a & b \\ c & d \end{pmatrix}$ with $M \nmid c$ and $M \nmid d$. The restriction $\xi_M^0 : \Z[S_M^0] \rightarrow H_1(X_1(M), C_M, \Z)$ is surjective (\cf \cite[\S 2.1.3]{Sharifi_survey}).

If $A$ is a commutative ring, let $K_2(A)$ be the second K-group of $A$, as defined by Quillen. For any $x,y \in A^{\times}$, there is an element $\{ x, y \}$ of $K_2(A)$, called the \emph{Steinberg symbol} of $x$ and $y$. It is bilinear in $x$ and $y$ and has the property that if $x+y=1$ then $\{ x, y \} = 1$.

There is an action of $\Gal(\Q(\zeta_M)/\Q)$ (and in particular of the complex conjugation) on $K_2(\Z[\zeta_M, \frac{1}{M}])$. We denote by $\mathcal{K}_M$ the largest quotient of $K_2(\Z[\zeta_M, \frac{1}{M}]) \otimes \Z[\frac{1}{2}]$ on which the complex conjugation acts trivially. The map $\Z[S_M^0] \rightarrow \mathcal{K}_M$ sending $\Gamma_1(M)  \cdot \begin{pmatrix} a & b \\ c & d \end{pmatrix}$ to the Steinberg symbol $\{ 1 - \zeta_M^c, 1-\zeta_M^d \}$ factors through $\xi_M^0$ (\cf \cite[\S 2.1.4]{Sharifi_survey}), and thus induces a map
$$\varpi_M : H_1(X_1(M), C_M^0, \Z) \rightarrow \mathcal{K}_M \text{ .}$$
Sharifi conjectured that $\varpi_M$ is annihilated by the Hecke operators $T_{\ell}-\ell\langle \ell \rangle - 1$ for primes $\ell$ not dividing $M$ (\cf the remark after Theorem 4.3.6 in \cite{Sharifi_Venkatesh}). This conjecture has a history of partial results: \cite{Fukaya_Kato}, \cite{LW} and most recently \cite{Sharifi_Venkatesh}. In particular, the restriction of $\varpi_M$ to $H_1(X_1(M), \Z)$ is known to be annihilated by $T_{\ell}-\ell\langle \ell \rangle - 1$ for primes $\ell$ not dividing $M$ (\cf \cite[Theorem 4.3.7]{Sharifi_Venkatesh}, where we warn the reader that they use \emph{usual} Manin symbols and \emph{dual} Hecke operators).

Another important aspect of Sharifi's theory is how the maps $\varpi_M$ relate with each others when varying $M$. This has been studied under some assumptions in \cite{Fukaya_Kato} and \cite{Scott}. If $p$ is a prime, there are two degeneracy maps $\pi_1, \pi_2 : X_1(Mp) \rightarrow X_1(M)$ given on the upper-half plane by $\pi_1 : z\mapsto z$ and $\pi_2 : z \mapsto pz$. On the $K$-side, there is a norm map $\Norm : \mathcal{K}_{Mp} \rightarrow \mathcal{K}_M$. Our main main result is the following.

\begin{thm}\label{main_thm}
Let $p\geq 2$ be a prime number and $M \geq 4$. Let $C \subset C_{Mp}^0$ be a subset of cusps which are all in the same orbit under the action of $\Ker((\Z/Mp\Z)^{\times} \rightarrow (\Z/M\Z)^{\times})$ (the action being given by diamond operators). 

\begin{enumerate}
\item\label{main_thm_i} Assume that $p$ divides $M$. We have a commutative diagram
\begin{center}
\begin{tikzcd}
H_1(X_1(Mp), C, \Z) \arrow[r, "\varpi_{Mp}"] \arrow[d, "\pi_1"] & \mathcal{K}_{Mp} \arrow[d, "\Norm"] \\
H_1(X_1(M), \Z)\arrow[r, "\varpi_{M}"] &  \mathcal{K}_{M}
\end{tikzcd}
\end{center}

\item\label{main_thm_ii} Assume that $p$ does not divide $M$. We have a commutative diagram
\begin{center}
\begin{tikzcd}
H_1(X_1(Mp), C, \Z) \arrow[r, "\varpi_{Mp}"] \arrow[d, "\pi_1 - \langle p \rangle \pi_2"] & \mathcal{K}_{Mp} \arrow[d, "\Norm"] \\
H_1(X_1(M), \Z)\arrow[r, "\varpi_{M}"] &  \mathcal{K}_{M}
\end{tikzcd}
\end{center}
Here, $\langle p \rangle$ is the $p$th diamond operator, induced by the action of a matrix $\begin{pmatrix} a & b \\ c & d \end{pmatrix} \in \Gamma_0(M)$ with $d \equiv p \text{ (modulo }M\text{)}$ on $X_1(M)$.
\end{enumerate}
\end{thm}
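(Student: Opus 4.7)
The plan is to verify the commutative diagram on the Manin generators of $H_1(X_1(Mp), C, \Z)$ and then organize the computation by cases. Since $\xi_{Mp}^0$ is surjective, the homology is spanned by Manin symbols $[\Gamma_1(Mp)\gamma]$ with $\gamma = \begin{pmatrix} a & b \\ c & d\end{pmatrix}\in\SL_2(\Z)$ whose associated geodesic has endpoints in $C$. The orbit hypothesis on $C$ forces $\pi_1(C)$ and $\pi_2(C)$ each to be a single cusp of $X_1(M)$, so that the pushforwards land in $H_1(X_1(M),\{\mathrm{pt}\},\Z)\cong H_1(X_1(M),\Z)$ and the diagram makes sense. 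It thus suffices to verify the equality of images in $\mathcal{K}_M$ for each such generator.

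The first computation is to express $\pi_{j,*}$ concretely. The upper-half-plane identities $\pi_1\circ W_{Mp} = W_M\circ\pi_2$ and $\pi_2\circ W_{Mp} = W_M\circ\pi_1$ yield on one hand $\pi_{2,*}\xi_{Mp}(\Gamma_1(Mp)\gamma) = \xi_M(\Gamma_1(M)\gamma)$, hence $\varpi_M(\pi_{2,*}[\gamma]) = \{1-\zeta_M^c,1-\zeta_M^d\}$, and on the other hand $\pi_{1,*}\xi_{Mp}(\Gamma_1(Mp)\gamma) = \{-d/(Mpb),-c/(Mpa)\}_{X_1(M)}$, which must be decomposed into level-$M$ Manin symbols via Manin's continued-fraction algorithm before $\varpi_M$ can be applied termwise; the diamond action $\langle p\rangle$ on $\mathcal{K}_M$ corresponds to the Galois automorphism $\zeta_M\mapsto\zeta_M^p$. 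On the $K$-theoretic side, since $\gcd(c,d)=1$, at most one of $c, d$ is divisible by $p$. When $p\mid c$, we have $1-\zeta_{Mp}^c = 1-\zeta_M^{c/p}\in\Q(\zeta_M)^\times$ and the $K_2$ projection formula yields $\Norm\{1-\zeta_{Mp}^c,1-\zeta_{Mp}^d\} = \{1-\zeta_M^{c/p},\Norm(1-\zeta_{Mp}^d)\}$; the remaining cyclotomic-unit norm is $1-\zeta_M^d$ in case \emph{(i)} (a direct product over the $p$ Galois conjugates, using $\zeta_{Mp}^M=\zeta_p\in\Q(\zeta_M)$) and $(1-\zeta_M^d)/(1-\zeta_M^{dp^{-1}})$ in case \emph{(ii)} (with $p^{-1}$ modulo $M$). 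A symmetric analysis covers $p\mid d$. The correction $-\langle p\rangle\pi_{2,*}$ in case \emph{(ii)} is designed so that, after the continued-fraction decomposition, the output on the homology side matches the Steinberg symbol arising from the norm computation.

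The main obstacle is the remaining case $p\nmid cd$, where neither entry of the Steinberg symbol lies in the base field and the projection formula does not apply; a direct Steinberg-relation manipulation is exactly what made the earlier work of Fukaya--Kato and Scott restrictive. The plan here is to follow Sharifi and Venkatesh: realize $\varpi_M$ as a cup product of Siegel units in $K_2$ of the universal elliptic curve (or, equivalently, of the open modular curve $Y_1(M)$). The norm compatibility of Siegel units under $\pi_1$ and $\pi_2$ is then essentially functorial, and the desired identity reduces to the compatibility of cup products with trace maps on the Siegel-unit side. Passing to the quotient $\mathcal{K}_M$, where complex conjugation acts trivially and $2$ is inverted, plays a crucial role in suppressing $2$-torsion phenomena that would otherwise obstruct the clean identification.
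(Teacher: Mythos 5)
There is a genuine gap here, and it sits exactly where you locate the ``main obstacle.'' For the case $p\nmid cd$ your proposal says to ``realize $\varpi_M$ as a cup product of Siegel units \dots{} the norm compatibility is then essentially functorial.'' That is a description of a hoped-for proof, not a proof: you never state what the norm-compatibility identity is, nor how it produces the specific correction term $-\langle p\rangle\pi_2$ in case (\ref{main_thm_ii}). You assert that this term ``is designed so that \dots{} the output matches the Steinberg symbol arising from the norm computation,'' but that matching is precisely the content of the theorem. Moreover, even in your ``easy'' cases ($p\mid c$ or $p\mid d$) the argument does not close: the projection formula gives you a single Steinberg symbol for $\Norm(\varpi_{Mp}(\gamma))$, while $\varpi_M(\pi_{1,*}\xi_{Mp}(\gamma))$ is a \emph{sum} of symbols coming from a continued-fraction expansion of $\{-d/(Mpb),-c/(Mpa)\}$ into level-$M$ Manin symbols, and you give no mechanism for comparing the two. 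This symbol-by-symbol matching is exactly the difficulty that made the earlier treatments restrictive, so deferring it to ``Steinberg-relation manipulation'' or to unspecified functoriality does not remove it.

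The paper's proof avoids any case analysis on $c,d$ and any continued fractions by working one level up, with the universal cocycle $\Theta:\SL_2(\Z)\to K_2^{(0)}/\Z\cdot\{-z_1,-z_2\}$ on $\mathbf{G}_m^2$. The key step (Lemma \ref{key_lem}) is the identity $\Theta(\phi_p(\gamma'))=\alpha_*\Theta(\gamma')$ for $\gamma'\in\Gamma_0(Mp)$, where $\alpha_*$ is the trace along $(z_1,z_2)\mapsto(z_1,z_2^p)$; this is verified on divisors using the injectivity of $\partial$ on $K_2^{(0)}/\Z\cdot\{-z_1,-z_2\}$, which is a one-line computation with the elements $\langle a,c\rangle$. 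The two cases of the theorem then come from specializing at $(1,\zeta_M)$: the fiber of $(z_1,z_2)\mapsto(z_1,z_2^p)$ over this point is $\Spec(\Q(\zeta_{Mp}))$ when $p\mid M$, but $\Spec(\Q(\zeta_{Mp}))\sqcup\Spec(\Q(\zeta_M))$ when $p\nmid M$, and the extra component, mapping through $\sigma_p$, is the geometric origin of the term $\langle p\rangle\pi_2$ that your proposal leaves unexplained. Finally, to pass from a cocycle identity on $\Gamma_0(Mp)$ to the stated diagram on $H_1(X_1(Mp),C,\Z)$ one needs Proposition \ref{prop_cocycle_homology} together with the boundary manipulation $\{c,\gamma c\}=\{\infty,\gamma\infty\}+(\langle\gamma\rangle-1)\{\infty,c\}$; note also that individual Manin symbols ``with endpoints in $C$'' do not span $H_1(X_1(Mp),C,\Z)$, so your reduction to generators needs more care than stated.
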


\begin{rems}
\begin{enumerate}
\item Theorem \ref{main_thm} (\ref{main_thm_i}) has been proved by Fukaya--Kato in \cite[Theorem 5.2.3 (2)]{Fukaya_Kato} after tensoring by $\Z_p$. They use $H^2(G_{\Q(\zeta_M)}, \Z_p(2))$ instead of $\mathcal{K}_M \otimes \Z_p$ (which is isomorphic by the \'etale Chern class map \cite{Tate_K2}). They also do not need to restrict to the subset $C$ of $C_{Mp}^0$. They techniques rely on $p$-adic Hodge theory.
\item Similarly, Theorem \ref{main_thm} (\ref{main_thm_ii}) has been proved (for the absolute homology) by Scott in \cite[Theorem 7]{Scott} after tensoring by $\Z_{\ell}$ for a prime $\ell \neq p$ dividing $M$ (Scott's $p$ is our $\ell$ and vice versa). Scott relies on the techniques of Fukaya--Kato. 
\item Thus, the main novelty of our result is to work with $\Z$ coefficients. This is because we rely instead of the motivic techniques of Sharifi and Venkatesh. 
\item It would be interesting to allow less restrictive conditions on $C$, and in particular replace $H_1(X_1(M), \Z)$ in the bottom line of our diagrams by a relative homology group. We were actually able to improve slightly our result when $C$ contains the cusp $\infty$: \cf diagrams (\ref{diagram_C_0_Mp_bis}) and (\ref{diagram_C_0_Mp_bis_2}). We were not able to go beyond these results because the techniques of Sharifi and Venkatesh essentially deal with the absolute homology of modular curves.
\item The techniques of Sharifi and Venkatesh, combined with the result of \S \ref{section_relative_homology} actually show that the restriction of $\varpi_M$ to $H_1(X_1(M), C_{\infty}, \Z)$ is annihilated by $T_{\ell}-\ell\langle \ell \rangle - 1$ for primes $\ell$ not dividing $M$, where $C_{\infty}$ are the cusps of $X_1(M)$ in the same diamond orbit as $\infty$. This is a slight improvement on the result of Sharifi--Venkatesh (which holds for the restriction of $\varpi_M$ to $H_1(X_1(M), \Z)$).
\end{enumerate}
\end{rems}

By combining Theorem \ref{main_thm} and the results of Fukaya--Kato, one gets the following result.

\begin{thm}\label{thm_Atkin_op}
Let $M \geq 4$. The map $H_1(X_1(M), \Z[\frac{1}{6}]) \rightarrow \mathcal{K}_M \otimes \Z[\frac{1}{6}]$ obtained by restricting $\varpi_M$ to $H_1(X_1(M), \Z)$ and inverting $6$ is annihilated by the Hecke operator $U_{\ell}-1$ for all primes $\ell$ dividing $M$. Here, $U_{\ell}$ is the classical Hecke operator of index $\ell$, corresponding to the double coset of $\begin{pmatrix} 1 & 0 \\ 0 & \ell \end{pmatrix}$.
\end{thm}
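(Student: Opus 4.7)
Let $\ell$ be a prime dividing $M$. We want to show that $\varpi_M(U_\ell x) = \varpi_M(x)$ in $\mathcal{K}_M \otimes \Z[\tfrac{1}{6}]$ for every $x \in H_1(X_1(M), \Z)$. The plan is to pass to level $M\ell$ via the two degeneracy maps $\pi_1, \pi_2 : X_1(M\ell) \to X_1(M)$, apply Theorem \ref{main_thm}(\ref{main_thm_i}) (which is valid since $\ell \mid M$), and invoke the results of Fukaya--Kato to close the argument.

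First, I would use the classical description of $U_\ell$ (for $\ell \mid M$) as a composition along $(\pi_1,\pi_2)$: writing $\pi_2^{\ast}$ for the transfer of $\pi_2$, one has $U_\ell = (\pi_1)_{\ast} \circ \pi_2^{\ast}$ on $H_1(X_1(M), \Z)$ up to standard sign conventions. Composing with $\varpi_M$ and applying Theorem \ref{main_thm}(\ref{main_thm_i}) yields
\[
\varpi_M(U_\ell x) = \varpi_M\big((\pi_1)_{\ast} \pi_2^{\ast} x\big) = \Norm\big(\varpi_{M\ell}(\pi_2^{\ast} x)\big),
\]
so that the theorem reduces to the identity $\Norm \circ \varpi_{M\ell} \circ \pi_2^{\ast} = \varpi_M$ in $\mathcal{K}_M \otimes \Z[\tfrac{1}{6}]$.

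The remaining identity is supplied, prime by prime, by Fukaya--Kato. After tensoring with $\Z_p$ and translating through the \'etale Chern class isomorphism $\mathcal{K}_M \otimes \Z_p \simeq H^2(G_{\Q(\zeta_M)}, \Z_p(2))$, they establish the $\pi_2$-analogue of Theorem \ref{main_thm}(\ref{main_thm_i})---namely that the norm of $\varpi_{M\ell}$ applied to a $\pi_2$-transferred absolute class recovers $\varpi_M$ of that class. Since this holds for every prime $p > 3$, and since the finitely generated $\Z[\tfrac{1}{6}]$-module $\mathcal{K}_M \otimes \Z[\tfrac{1}{6}]$ embeds in $\prod_{p>3}\mathcal{K}_M \otimes \Z_p$, we deduce the required identity integrally after inverting $6$.

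The main obstacle, I expect, is reconciling conventions: Fukaya--Kato use unmodified Manin symbols and the dual Hecke operator $U_\ell^{\ast}$, whereas the paper uses the Atkin--Lehner-modified Manin symbols together with the standard $U_\ell$. Conjugation by the Atkin--Lehner involution $W_M$ interchanges $U_\ell$ and $U_\ell^{\ast}$ and must be tracked carefully (including its interaction with the norm map and with complex conjugation on the cyclotomic side) to convert their $p$-adic statement into ours. The primes $2$ and $3$ are inverted for standard reasons: $2$ is already removed in the definition of $\mathcal{K}_M$, while inverting $3$ handles the elliptic-point contributions on $X_1(M)$ and the associated anomalies in the Manin relations.
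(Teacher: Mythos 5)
There is a genuine gap in your argument, and it sits exactly where the paper's actual proof does its main work. Your reduction leaves you needing the identity $\Norm\circ\varpi_{M\ell}\circ\pi_2^{\ast}=\varpi_M$, and you propose to get it from Fukaya--Kato ``for every prime $p>3$'' and then glue over all such $p$. But Fukaya--Kato's norm compatibilities are $p$-adic statements that require $p$ to divide the level at which they are applied: at level $M\ell$ they would only give you something after tensoring with $\Z_p$ for primes $p\mid M\ell$ (and $p\geq 5$). Since $\mathcal{K}_M\otimes\Z[\frac{1}{6}]$ certainly does not inject into $\prod_{p\mid M\ell,\,p\geq 5}\mathcal{K}_M\otimes\Z_p$, the gluing step fails, and no integral (or ``all $p$'') version of the $\pi_2$-transfer compatibility is available --- proving one would require another motivic argument of Sharifi--Venkatesh type, which neither you nor the paper supplies. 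The paper circumvents precisely this obstruction differently: to prove the statement after $\otimes\Z_p$ for an arbitrary prime $p\geq 5$ with $p\nmid M$, it adds $p$ itself (not $\ell$) to the level, invokes Fukaya--Kato at level $Mp$ (where $p$ now divides the level) to kill $U_\ell-1$ there, and descends through the diagram of Theorem \ref{main_thm} (\ref{main_thm_ii}) using the surjectivity of $\pi_1-\langle p\rangle\pi_2$ on $H_1(\cdot,\Z_p)$, which is reduced via Nakayama and intersection duality to Ihara's lemma in the form of Edixhoven. The case $p\mid M$ needs none of this and is quoted directly from Fukaya--Kato.

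Two secondary points. First, with $\pi_1,\pi_2\colon X_1(M\ell)\to X_1(M)$ the composite $(\pi_1)_{\ast}\circ\pi_2^{\ast}$ is not $U_\ell$: the correspondence computing $U_\ell$ lives on the intermediate curve of level $\Gamma_1(M)\cap\Gamma_0(M\ell)$, and factoring through $X_1(M\ell)$ introduces an extra factor (the degree $\ell$ of $X_1(M\ell)$ over that curve, or a sum of diamond operators), which would have to be tracked. Second, your stated reason for inverting $3$ (elliptic points) is off the mark: $\Gamma_1(M)$ is torsion-free for $M\geq 4$. The primes $2$ and $3$ are inverted simply because Fukaya--Kato's input is only available for $p\geq 5$, so the prime-by-prime argument cannot reach $p=2,3$.
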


\begin{rems}
\begin{enumerate}
\item Sharifi and Venkatesh were unable to prove this result using their techniques. Our result thus completes the proof of Sharifi's conjecture \cite[Conjecture 4.3.5 a.]{Sharifi_Venkatesh} for the absolute homology, after inverting $6$. 
\item Fukaya--Kato proved Theorem \ref{thm_Atkin_op} after tensoring with $\Z_p$ for a prime $p\geq 5$ dividing $M$. Our trick is to use Fukaya--Kato's result after adding $p$ to the level, and then descend using Theorem \ref{main_thm} (\ref{main_thm_ii}). The reason we have to invert $6$ is that Fukaya and Kato assume $p \nmid 6$ (note that $2$ is inverted anyway in the definition of $\varpi_M$). It would be nice to be able to avoid inverting $3$ in our result.
\end{enumerate}
\end{rems}

The plan of this paper is as follows. In section \ref{section_preliminaries}, we recall some constructions of Sharifi and Venkatesh. In section \ref{section_relative_homology}, we explain how to use the cocycle of Sharifi--Venkatesh to produce a map on a certain relative homology group. Finally, in section \ref{section_proof}, we prove Theorems \ref{main_thm} and \ref{thm_Atkin_op}.

\section{Reminders from the work of Sharifi--Venkatesh}\label{section_preliminaries}
Sharifi and Venkatesh constructed a cocycle $$\Theta : \SL_2(\Z) \rightarrow K_2/\Z\cdot \{ -z_1, -z_2 \} \text{ .}$$
Here, $K_2 := K_2(\Q(\mathbf{G}_m^2)) = K_2(\Q(z_1, z_2))$ carries a left action of $\SL_2(\Z)$ induced by the natural right action of $\SL_2(\Z)$ on $\mathbf{G}_m^2$ given by $(z_1, z_2) \cdot \begin{pmatrix} a & b \\ c & d \end{pmatrix} = (z_1^az_2^c, z_1^bz_2^d)$. Furthermore, $\Z\cdot \{ -z_1, -z_2 \}$ is the subgroup of $K_2$ generated by the Steinberg symbol of $-z_1$ and $-z_2$. The cocycle $\Theta$ actually takes values in $K_2^{(0)}/ \{ -z_1, -z_2 \}$, where $K_2^{(0)}$ is the subgroup of $K_2$ fixed by the pushforward  $[m]_*$ of the multiplication by $m$ map for all $m \in \mathbf{N}$ (\cf \cite[\S 4.1.2]{Sharifi_Venkatesh}). 

Let us recall a characterization of $\Theta$. Let $K_1 = \bigoplus_D K_1(\Q(D)) = \bigoplus_D \Q(D)^{\times} $ where $D$ runs through all the irreducible divisors of $\mathbf{G}_m^2$. There is a divisor map $\partial : K_2 \rightarrow K_1$ sending a Steinberg symbol $\{ f, g\}$ to the element of $K_1$ whose component in $D$ is $(-1)^{v(f)v(g)}g^{v(f)}f^{-v(g)}$, where $v$ is the valuation coming from $D$ (\cf \cite[(2.6)]{Sharifi_Venkatesh}). The map $\partial$ induces an embedding $\partial : K_2^{(0)}/\Z\cdot \{ -z_1, -z_2 \} \hookrightarrow K_1$ (\cf \cite[\S 3.2 and Lemma 4.1.2]{Sharifi_Venkatesh}). As in \cite[\S 3.2]{Sharifi_Venkatesh}, for any $a, c \in \Z$ with $\gcd(a,c)=1$ there is a special element $\langle a , c \rangle$ in $K_1$ which is supported on the divisor $D : 1-z_1^az_2^c=0$ and is given there by the function $1-z_1^bz_2^d$ for any $b, d\in \Z$ such that $ad-bc=1$ (this is independent of the choice of $b$ and $d$). For any $\gamma = \begin{pmatrix} a & b \\ c & d \end{pmatrix} \in \SL_2(\Z)$, $\Theta(\gamma)$ is characterized by the equality $\partial(\Theta(\gamma)) = \langle b, d \rangle - \langle 0, 1 \rangle$ in $K_1$ (\cf \cite[Proposition 3.3.1]{Sharifi_Venkatesh}). As in the proof of \cite[Proposition 3.3.4]{Sharifi_Venkatesh}, one sees that $\Theta(\gamma)=0$ if $\gamma(0)=0$, \ie if $\gamma = \begin{pmatrix} 1 & 0 \\ m & 1 \end{pmatrix}$ for some $m \in \Z$.

Finally, let us recall how Sharifi and Venkatesh (\cf \cite[\S 4.2.1]{Sharifi_Venkatesh}) specialize $\Theta$ to a cocycle $\Theta_M : \Gamma_0(M) \rightarrow K_2(\Z[\zeta_M, \frac{1}{M}])/\Z\cdot \{-1, -\zeta_M\}$ (for every $M \geq 4$). Here, the action of $ \Gamma_0(M)$ on $K_2(\Z[\zeta_M, \frac{1}{M}])$ is given as follows: we have a surjective group homomorphism $\Gamma_0(M) \rightarrow (\Z/M\Z)^{\times}$ sending $\begin{pmatrix} a & b \\ c & d \end{pmatrix}$ to $d \text{ (modulo }M\text{)}$, and $(\Z/M\Z)^{\times}$ acts on $K_2(\Z[\zeta_M, \frac{1}{M}])$ via the usual identification $(\Z/M\Z)^{\times} \simeq \Gal(\Q(\zeta_M)/\Q)$. The idea is to evaluate $\Theta(\gamma)$ at $(z_1, z_2) = (1, \zeta_M)$. One cannot in general evaluate an element of $K_2$ at $(1, \zeta_M)$. But if $\gamma \in \Gamma_0(M)$, we have $\Theta(\gamma) \in H^2(U_{\gamma}, 2)/\{-z_1, -z_2\}$ where $U_{\gamma}$ is the open subset of $\mathbf{G}_m^2$ which is the complement of $\{z_1^bz_2^d = 1\} \cup \{z_2=1\}$ (we are using motivic cohomology, \cf \cite[\S 2.1]{Sharifi_Venkatesh}). Since $(1, \zeta_M) \in U_{\gamma}$, there is a specialization map $s_M^* : H^2(U_{\gamma}, 2) \rightarrow H^2(\Q(\zeta_M), 2) \simeq K_2(\Q(\zeta_M))$. By \cite[Corollary 4.2.5]{Sharifi_Venkatesh}, $\Theta_M(\gamma) := s_M^*(\Theta(\gamma))$ actually belongs to the subgroup $K_2(\Z[\zeta_M, \frac{1}{M}])/\Z\cdot \{-1, -\zeta_M\}$ of $K_2(\Q(\zeta_M))/\Z\cdot \{-1, -\zeta_M\}$.

\section{From cocycles to relative homology}\label{section_relative_homology}

In this section, we explain how the cocycle $\Theta_M$ of section \ref{section_preliminaries} gives rise to a group homomorphism $$\tilde{\Theta}_M : H_1(X_1(M), C_0, \Z) \rightarrow K_2(\Z[\zeta_M, \frac{1}{M}])/\Z\cdot \{-1, -\zeta_M\}$$ where $C_0$ is the set of cusps of $X_1(M)$ which are in the same diamond orbit as the cusp $\Gamma_1(M) \cdot 0$. There is a map $f : \Gamma_0(M) \rightarrow  H_1(X_1(M), C_0, \Z)$ given by $\gamma \mapsto \{0, \gamma 0\}$.
 Note that $f$ is a $1$-cocycle (for the action of $\Gamma_0(M)$ on $H_1(X_1(M), C_0, \Z)$ via diamond operators). 
\begin{prop}\label{prop_cocycle_homology}
Let $T$ be a $\Z[(\Z/M\Z)^{\times}/\pm 1]$-module (where $M>3$). Let $u : \Gamma_0(M) \rightarrow T$ be a $1$-cocycle satisfying $u(\gamma)=0$ for any $\gamma \in \Gamma_1(M)$ such that $\gamma c = c$ for some $c \in \mathbf{P}^1(\Q)$. Then $u$ factors through the map $f : \Gamma_0(M) \rightarrow  H_1(X_1(M), C_0, \Z)$, thus inducing a $1$-cocycle $\tilde{u} : H_1(X_1(M), C_0, \Z) \rightarrow T$ (for the action of $(\Z/M\Z)^{\times}/\pm1$ on both sides).
\end{prop}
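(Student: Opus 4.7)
The plan is to define $\tilde{u}$ by declaring $\tilde{u}(\{0, \gamma \cdot 0\}) := u(\gamma)$ on the image of $f$ and extending by linearity; the substantive content is to check that this recipe is well-defined. First, I verify that the symbols $\{0, \gamma \cdot 0\}$ for $\gamma \in \Gamma_0(M)$ generate $H_1(X_1(M), C_0, \Z)$: any relative class is represented by a sum of geodesics on $X_1(M)$ with endpoints in $C_0$, and lifting to $\mathbb{H} \cup S$, where $S := \Gamma_0(M) \cdot 0$ is the preimage of $C_0$, then writing each lifted geodesic as $\{s, s'\} = \{0, s'\} - \{0, s\}$ before projecting, yields the required decomposition.

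Next, I identify two manifest families of relations among the generators $\{0, \gamma \cdot 0\}$ in $H_1(X_1(M), C_0, \Z)$: (i) the cocycle identity $\{0, \alpha \beta \cdot 0\} = \{0, \alpha \cdot 0\} + \alpha \cdot \{0, \beta \cdot 0\}$ (with $\alpha$ acting via the diamond operator), a direct consequence of $\{a, b\} + \{b, c\} = \{a, c\}$ in $\mathbb{H} \cup \mathbf{P}^1(\Q)$; and (ii) the parabolic relation $\{0, \delta \cdot 0\} = 0$ for $\delta \in \Gamma_1(M)$ stabilizing some $c \in \mathbf{P}^1(\Q)$, which holds because such $\delta$ lies in the cyclic parabolic subgroup fixing $c$ in $\Gamma_1(M)$ and the corresponding loop on the compactified curve $X_1(M)$ is null-homologous. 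The map $u$ respects (i) by virtue of being a cocycle and (ii) by the hypothesis; thus $\tilde{u}$ is well-defined at least modulo the consequences of these relations.

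The main obstacle is to show that (i) and (ii) in fact generate all relations among the symbols $\{0, \gamma \cdot 0\}$ in $H_1(X_1(M), C_0, \Z)$. I would proceed via the short exact sequence
$$0 \to H_1(X_1(M), \Z) \to H_1(X_1(M), C_0, \Z) \to \tilde{H}_0(C_0, \Z) \to 0,$$
analyzing each term separately. For $H_1(X_1(M), \Z) \cong \Gamma_1(M)^{\mathrm{ab}}/\langle \text{parabolics}\rangle$, the restriction $u|_{\Gamma_1(M)}$ is a homomorphism (since $\Gamma_1(M)$ acts trivially on $T$) that kills parabolic stabilizers by hypothesis, so it descends to a map $\bar{u}_0 : H_1(X_1(M), \Z) \to T$. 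For the free quotient $\tilde{H}_0(C_0, \Z)$, I would fix, for each cusp $c \in C_0$, a representative $\gamma_c \in \Gamma_0(M)$ with $\Gamma_1(M) \gamma_c \cdot 0 = c$ and set $\tilde{u}(\{0, \gamma_c \cdot 0\}) := u(\gamma_c)$. Consistency under a change of representative $\gamma_c' = \delta \gamma_c s$, with $\delta \in \Gamma_1(M)$ and $s$ in the stabilizer of $0$ in $\Gamma_0(M)$ (contained in $\Gamma_1(M)$ modulo $\pm I$ since $M > 3$), reduces via one cocycle step to checking that $u(\gamma_c') - u(\gamma_c) = u(\delta)$ matches $\bar{u}_0(\{\gamma_c \cdot 0, \delta \gamma_c \cdot 0\})$, which in turn equals $\gamma_c \cdot u(\gamma_c^{-1} \delta \gamma_c) = u(\delta)$ after transporting the loop at the cusp corresponding to $\gamma_c$ back to the cusp $0$ via the diamond action. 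The $(\Z/M\Z)^\times/\pm 1$-equivariance of $\tilde{u}$ follows from a parallel cocycle computation.
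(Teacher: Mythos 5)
Your construction is correct in outline, but it takes a genuinely different route from the paper. The paper does exactly what you call the ``main obstacle'': it proves that the cocycle and parabolic relations are a \emph{complete} set of relations, by showing that the map $\Z[G\times \Gamma_0]/I \to H_1(X_1(M),C_0,\Z)$, $(g,\gamma)\mapsto g\cdot\{0,\gamma 0\}$ (with $G=(\Z/M\Z)^{\times}/\pm1$ and $I$ encoding precisely your relations (i) and (ii)) is an isomorphism. Surjectivity is as in your first paragraph; injectivity comes from a rank count: Shapiro's lemma gives $H_1(\Gamma_1(M),\Z)\simeq H_1(\Gamma_0(M)/\pm1,\Z[G])$, and splicing the bar-resolution exact sequence with the cusp sequence for $Y_1(M)\subset X_1(M)$ shows both sides are free of rank $\rk_{\Z}H_1(X_1(M),\Z)+\#G-1$. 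You instead sidestep the presentation question by building $\tilde u$ directly from the splitting of $0\to H_1(X_1(M),\Z)\to H_1(X_1(M),C_0,\Z)\to \tilde H_0(C_0,\Z)\to 0$: descending $u|_{\Gamma_1(M)}$ to $\Gamma_1(M)^{\mathrm{ab}}/\langle\text{parabolics}\rangle\simeq H_1(X_1(M),\Z)$ and choosing lifts $\gamma_c$ over the free quotient, then verifying $\tilde u\circ f=u$ by the coset decomposition $\gamma=\delta\gamma_c s$. This is more elementary (no Shapiro, no rank count) and makes visible where each hypothesis enters, at the cost of choices and an explicit compatibility check; the paper's argument buys the stronger statement that $\Z[G\times\Gamma_0]/I$ is literally a presentation of the relative homology. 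Both arguments use the same two inputs (torsion-freeness of $\Gamma_1(M)$ for $M>3$, and the cusp exact sequences), and both share the same small loose end: the stabilizer of $0$ in $\Gamma_0(M)$ is only contained in $\pm\Gamma_1(M)$, and the hypothesis does not literally force $u(-I)=0$ (the paper silently passes to $\Gamma_0(M)/\pm1$ at the same point), so your treatment is no less rigorous than the original here.
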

\begin{proof} For notational simplicity, let $G = (\Z/M\Z)^{\times} / \pm 1$, $\Gamma_0 = \Gamma_0(M)/\pm 1$ and $\Gamma_1 = \Gamma_1(M) \subset \Gamma_0$. If $\gamma \in \Gamma_0$, we let $\langle \gamma \rangle \in G$ be the class of the lower-right corner of $\gamma$.
For any $c \in \mathbf{P}^1(\Q)$, let $\gamma_c \in \Gamma_1$ be a generator of the stabilizer of $c$ in $\Gamma_1$. By assumption, we have $u(\gamma_c) = 0$ for all $c \in \mathbf{P}^1(\Q)$. Thus, $u$ induces a group homomorphism $u' : \Z[G \times \Gamma_0]/I \rightarrow T$ given by $u'(g, \gamma) = g\cdot u(\gamma)$, where $I$ is subgroup of $ \Z[G \times \Gamma_0]$ generated by the elements $(g, \gamma\gamma') - (g, \gamma) - (g\langle \gamma \rangle, \gamma')$ and by the $(1, \gamma_c) - (1, 1)$ for all $g \in G$, $\gamma, \gamma' \in \Gamma_0(M)$ and $c \in \mathbf{P}^1(\Q)$. 

It suffices to prove that the map $ \Z[G \times \Gamma_0]/I \rightarrow H_1(X_1(M), C_0, \Z)$ sending $(g, \gamma)$ to $g\cdot \{0, \gamma 0 \}$ is an isomorphism. It is clearly surjective, so it is enough to show that $ \Z[G \times \Gamma_0]/I$ is a free $\Z$ module of same rank as $H_1(X_1(M), C_0, \Z)$. Since $M>3$, the group $\Gamma_1$ is torsion-free and we have $H_1(\Gamma_1, \Z) \simeq H_1(Y_1(M), \Z)$. By Shapiro's lemma for group homology, we have $H_1(\Gamma_1, \Z) \simeq H_1(\Gamma_0, \Z[G])$. By definition of group homology, we have an exact sequence
$$0 \rightarrow H_1(\Gamma_0, \Z[G]) \rightarrow \Z[G \times \Gamma_0]/J \xrightarrow{\partial} \Z[G] \rightarrow \Z \rightarrow 0$$
where $\partial (g, \gamma) = g\cdot \langle \gamma \rangle - g$ and $J$ is the subgroup of $\Z[G \times \Gamma_0]$ generated by $(g, \gamma\gamma') - (g, \gamma) - (g\langle \gamma \rangle, \gamma')$ for $g \in G$, $\gamma, \gamma' \in \Gamma_0(M)$. The last map $\Z[G] \rightarrow \Z$ is the augmentation (degree) map (note that $J$ is indeed in the kernel of $\partial$).

We also have a short exact sequence 
$$0 \rightarrow \Z \rightarrow \Z[C_M] \rightarrow H_1(Y_1(M), \Z) \rightarrow H_1(X_1(M), \Z) \rightarrow 0$$
where $C_M$ is the set of cusps of $Y_1(M)$. Here, the map $ \Z[C_M]$ sends a cusp $c$ to the homology class of a small loop around $c$ in $Y_1(M)$. Under the isomorphism $H_1(Y_1(M), \Z) \simeq H_1(\Gamma_0, \Z[G])$ and the embedding $H_1(\Gamma_0, \Z[G]) \hookrightarrow \Z[G \times \Gamma_0]/J$ described above, the map $\Z[C_M] \rightarrow H_1(\Gamma_0, \Z[G])$ sends a cusp $c$ to the class of $\gamma_c - 1$ in $\Z[G \times \Gamma_0]/J$. Therefore, we have an exact sequence
$$0 \rightarrow H_1(X_1(M), \Z) \rightarrow \Z[G \times \Gamma_0]/I \xrightarrow{\partial} \Z[G] \rightarrow \Z \rightarrow 0 \text{ .}$$
This shows that $ \Z[G \times \Gamma_0]/I$ is a free $\Z$-module of rank $\rk_{\Z} H_1(X_1(M), \Z) + \#G -1$. We have $\# G = \# C_0$, and the exact sequence
$$0 \rightarrow H_1(X_1(M), \Z) \rightarrow H_1(X_1(M),  C_0, \Z) \rightarrow \Z[C_0] \rightarrow \Z \rightarrow 0$$
shows that $\rk_{\Z}  \Z[G \times \Gamma_0]/I  = \rk_{\Z} H_1(X_1(M),  C_0, \Z)$, as wanted.

\end{proof}

\section{Proofs of the theorems}\label{section_proof}

We start with the following lemma (we thank Venkatesh for explaining this to us). 

\begin{lem}\label{key_lem} Let $M \geq 4$ and $p\geq 2$ be a prime. Let $\alpha = \begin{pmatrix} 1 & 0 \\ 0 & p \end{pmatrix}$.
Let $\phi_p : \Gamma_0(Mp) \rightarrow \Gamma_0(M)$ be the group homomorphism sending $\begin{pmatrix} a & b \\ c & d \end{pmatrix}$ to $\begin{pmatrix} a & pb \\ c/p & d \end{pmatrix}$. We have a commutative diagram
\begin{center}
\begin{tikzcd}
\Gamma_0(Mp) \arrow[r, "\Theta"] \arrow[d, "\phi_p"] & K_2^{(0)}/\Z \cdot \{-z_1, -z_2\} \arrow[d, "\alpha_*"] \\
\Gamma_0(M) \arrow[r, "\Theta"] &  K_2^{(0)}/\Z \cdot \{-z_1, -z_2\}
\end{tikzcd}
\end{center}
where $\alpha_*$ is the trace map induced by $\alpha$.
\end{lem}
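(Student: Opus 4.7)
The strategy is to exploit the injective divisor map $\partial : K_2^{(0)}/\Z\cdot\{-z_1, -z_2\} \hookrightarrow K_1$ from Section \ref{section_preliminaries} together with the characterization $\partial(\Theta(\gamma)) = \langle b, d\rangle - \langle 0, 1\rangle$. It thus suffices to check that the diagram commutes after applying $\partial$. Writing $\gamma = \begin{pmatrix} a & b \\ c & d\end{pmatrix} \in \Gamma_0(Mp)$, so that $\phi_p(\gamma) = \begin{pmatrix} a & pb \\ c/p & d\end{pmatrix}$, one gets directly
\[
\partial(\Theta(\phi_p(\gamma))) = \langle pb, d\rangle - \langle 0, 1\rangle.
\]
The divisor map $\partial$ is compatible with $K$-theoretic pushforwards along finite flat morphisms, so $\partial \circ \alpha_* = \alpha_* \circ \partial$ (with $\alpha_*$ on $K_1$ defined as the norm map on each divisorial component). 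The problem therefore reduces to verifying the two identities
\[
\alpha_*\langle b, d\rangle = \langle pb, d\rangle, \qquad \alpha_*\langle 0, 1\rangle = \langle 0, 1\rangle
\]
in $K_1$.

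For the first identity, the key geometric observation is that $\alpha : (z_1, z_2) \mapsto (z_1, z_2^p)$ sends the irreducible divisor $D = \{1 - z_1^b z_2^d = 0\}$ onto $D' = \{1 - w_1^{pb} w_2^d = 0\}$ (writing $(w_1, w_2)$ for target coordinates). The hypothesis $p \mid c$ combined with $ad - bc = 1$ forces $\gcd(p, d) = 1$, and a direct point count then shows that $\alpha|_D : D \to D'$ is birational—in fact an isomorphism—with associated function field identification $w_1 \mapsto z_1$, $w_2 \mapsto z_2^p$. Hence $\alpha_*$ on the $D$-component of $K_1$ is simply the inverse of this identification. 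Using the Bezout representatives $(b', d') = (-a, -c)$ for $\langle b, d\rangle$ (which gives the function $1 - z_1^{-a} z_2^{-c}$ on $D$) and $(-a, -c/p)$ for $\langle pb, d\rangle$ (which gives $1 - w_1^{-a} w_2^{-c/p}$ on $D'$), the identification sends $w_2^{-c/p} \mapsto (z_2^p)^{-c/p} = z_2^{-c}$, so the two functions correspond, proving $\alpha_*\langle b, d\rangle = \langle pb, d\rangle$. The second identity $\alpha_*\langle 0, 1\rangle = \langle 0, 1\rangle$ is immediate, since $\alpha$ restricts to the identity on the divisor $\{z_2 = 1\}$.

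The main obstacle is the birationality of $\alpha|_D$, which is what collapses the $K_1$-pushforward into the simple identification above. This birationality crucially requires $p \mid c$; were $p \nmid c$, the map $\alpha|_D$ would instead be a nontrivial cover of degree $\gcd(p, d)$, and the clean formula in the lemma would have to be replaced by a genuine norm of functions along the divisor.
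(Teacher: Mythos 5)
Your proof is correct, and it reaches the same two target identities as the paper---namely $\alpha_*\langle b,d\rangle=\langle pb,d\rangle$ and $\alpha_*\langle 0,1\rangle=\langle 0,1\rangle$ after reducing to $K_1$ via the injectivity of $\partial$---but by a genuinely different method for the first one. The paper argues formally: it writes $\langle b,d\rangle=\gamma^*\langle 0,1\rangle=(\gamma^{-1})_*\langle 0,1\rangle$ and uses the conjugation identity $\alpha_*(\gamma^{-1})_*=(\alpha^{-1}\gamma\alpha)^*\alpha_*$ together with the single base computation $\alpha_*\langle 0,1\rangle=\langle 0,1\rangle$, so that everything is reduced to functoriality plus one easy case; the hypothesis $\gamma\in\Gamma_0(Mp)$ enters through the integrality of $\alpha^{-1}\gamma\alpha=\phi_p(\gamma)$. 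You instead compute the pushforward directly on the supporting divisor: you show $f|_D:D\to D'$ is an isomorphism of subtori (using $p\nmid d$, which is the same hypothesis $p\mid c$ in disguise) and match explicit Bezout representatives of the defining functions. Your route is more hands-on and makes the geometric role of $p\mid c$ transparent (your closing remark about the degree-$\gcd(p,d)$ cover when $p\nmid d$ is a nice sanity check the paper does not offer), while the paper's route avoids any divisor-level geometry beyond the base case. One point in your favor: your target $\alpha_*\langle b,d\rangle=\langle pb,d\rangle$ is the form actually forced by the characterization $\partial(\Theta(\phi_p(\gamma)))=\langle pb,d\rangle-\langle 0,1\rangle$; the paper's displayed versions $\langle b/p,d\rangle$ and $\langle c/p,d\rangle$ are mutually inconsistent and are evidently typos for $\langle pb,d\rangle$. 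The only step you state without proof that the paper also leaves implicit is the compatibility $\partial\circ\alpha_*=\alpha_*\circ\partial$ (covariance of the Gersten sequence for the finite flat map $f$), which is standard; I see no gap.
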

\begin{proof}
Since $\partial : K_2^{(0)}/\Z \cdot \{-z_1, -z_2\} \hookrightarrow K_1$ is injective, it suffices to prove that the following diagram is commutative:

\begin{center}
\begin{tikzcd}
\Gamma_0(Mp) \arrow[r, "\partial \circ \Theta"] \arrow[d, "\phi_p"] & K_1 \arrow[d, "\alpha_*"] \\
\Gamma_0(M) \arrow[r, "\partial \circ \Theta"] &  K_1
\end{tikzcd}
\end{center}
In other words, if $\gamma = \begin{pmatrix} a & b \\ c & d \end{pmatrix} \in \Gamma_0(Mp)$ then it suffices to check that 
\begin{equation}\label{equation_lemma}
\alpha_*(\langle b, d \rangle - \langle 0, 1 \rangle) = \langle b/p, d \rangle - \langle 0, 1 \rangle \text{ .}
\end{equation}
As in \cite[(3.2)]{Sharifi_Venkatesh}, we have $\langle b , d \rangle = \gamma^* \langle 0, 1 \rangle$ where $\gamma^* : K_1 \rightarrow K_1$ is the pull-back induced by the right action of $\gamma$ on $\mathbf{G}_m^2$. Note that we have $\gamma^* = (\gamma^{-1})_*$. Thus, we have:
$$\alpha_* \langle b, d \rangle = \alpha_* (\gamma^{-1})_* \langle 0, 1 \rangle = (\gamma^{-1}\cdot \alpha)_* \langle 0, 1 \rangle =  (\alpha^{-1} \cdot \gamma^{-1}\cdot\alpha)_* \alpha_* \langle 0, 1 \rangle =(\alpha^{-1} \cdot \gamma \cdot\alpha)^* \alpha_* \langle 0, 1 \rangle \text{ .}$$
One easily checks that $\alpha_*\langle 0 , 1 \rangle = \langle 0, 1 \rangle$ (this is essentially the same computation as \cite[(2.8)]{Sharifi_Venkatesh}). Since $\alpha^{-1} \cdot \gamma \cdot\alpha = \begin{pmatrix} a & pb \\ c/p & d \end{pmatrix}$, we get $\alpha_* \langle b, d \rangle  = \langle c/p, d \rangle$. We also get $\alpha_* \langle 0, 1 \rangle = \langle 0, 1 \rangle$. This proves (\ref{equation_lemma}).
\end{proof}

We are now ready to prove Theorem \ref{main_thm}. Let $\gamma' \in \Gamma_0(Mp)$ and let $\gamma = \phi_p(\gamma') \in \Gamma_0(M)$ be as in Lemma \ref{key_lem}. Let $f : \mathbf{G}_m^2 \rightarrow \mathbf{G}_m^2$ sending $(z_1, z_2)$ to $(z_1, z_2^p)$ (note that $f$ is induced by the action of the matrix $\alpha$ of Lemma \ref{key_lem}). Let $U = U_{\gamma}$ be as in section \ref{section_preliminaries} and $U' = f^{-1}(U) \cap U_{\gamma'}$. Both $U$ and $U'$ are open subschemes of $\mathbf{G}_m^2$, and we have $(1, \zeta_M) \in U$ and $(1, \zeta\cdot \zeta_{Mp}) \in U'$ for all $p$th root of unity $\zeta$.

Consider the following cartesian diagram of schemes:
\begin{center}
\begin{tikzcd}
X\arrow[r] \arrow[d] & U' \arrow[d, "f"] \\
\Spec(\Q(\zeta_M)) \arrow[r, "s_M"] &  U
\end{tikzcd}
\end{center}
where $s_M$ is given by the closed point $(1, \zeta_M) \in U$, and $X$ makes the diagram cartesian by definition. There are two distinct cases for $X$ depending on whether $p$ divides $M$ or not.

\subsection{The case $p \mid M$}
Assume first that $p$ divides $M$. Then $X \simeq \Spec(\Q(\zeta_{Mp}))$ and the map $X \rightarrow U'$ comes from the point $(1, \zeta_{Mp}) \in U'$. Passing to motivic cohomology (\cf \cite[Lemma 2.1.1]{Sharifi_Venkatesh}), we get a commutative diagram
\begin{center}
\begin{tikzcd}
H^2(U', 2) \arrow[r, "s_{Mp}^*"] \arrow[d, "f_*"] & K_2(\Q(\zeta_{Mp})) \arrow[d, "\Norm"] \\
H^2(U, 2) \arrow[r, "s_M^*"] &  K_2(\Q(\zeta_M))
\end{tikzcd}
\end{center}
and hence a commutative diagram

\begin{center}
\begin{tikzcd}
H^2(U', 2)/\Z \cdot \{-z_1, -z_2\} \arrow[r, "s_{Mp}^*"] \arrow[d, "f_*"] & K_2(\Q(\zeta_{Mp}))/\Z \cdot \{-1, -\zeta_{Mp}\} \arrow[d, "\Norm"] \\
H^2(U, 2)/ \Z \cdot \{-z_1, -z_2\} \arrow[r, "s_M^*"] &  K_2(\Q(\zeta_M))/ \Z \cdot \{-1, -\zeta_M\}
\end{tikzcd}
\end{center}

By Lemma \ref{key_lem}, we get $s_{M}^*(\Theta(\gamma)) = \Norm(s_{Mp}^*(\Theta(\gamma')))$, \ie 
\begin{equation}\label{norm_eq_1}
\Norm(\Theta_{Mp}(\gamma')) = \Theta_M(\gamma) \text{ .}
\end{equation}

By Proposition \ref{prop_cocycle_homology}, equation (\ref{norm_eq_1}) yields the following commutative diagram
\begin{center}
\begin{equation}\label{diagram_C_0_Mp}
\begin{tikzcd}
H_1(X_1(Mp), C_0', \Z) \arrow[r, "\tilde{\Theta}_{Mp}"]\arrow[d, "\pi_2"] & K_2(\Z[\zeta_{Mp}, \frac{1}{M}])/\Z\cdot \{-1, -\zeta_{Mp}\}\arrow[d, "\Norm"] \\
H_1(X_1(M), C_0, \Z) \arrow[r, "\tilde{\Theta}_M"] &   K_2(\Z[\zeta_{M}, \frac{1}{M}])/\Z\cdot \{-1, -\zeta_{M}\}
\end{tikzcd}
\end{equation}
\end{center}
where $C_0'$ (resp. $C_0$) is the set of cusps of $X_1(Mp)$ (resp. $X_1(M)$) in the same orbit as $0$. Since $\{-1, -\zeta_{M}\}$ is acted upon by $-1$ by the complex conjugation, we have canonical projections $ K_2(\Z[\zeta_{M}, \frac{1}{M}])/\Z\cdot \{-1, -\zeta_{M}\} \otimes \Z[\frac{1}{2}] \rightarrow \mathcal{K}_M$ and  $ K_2(\Z[\zeta_{Mp}, \frac{1}{M}])/\Z\cdot \{-1, -\zeta_{Mp}\} \otimes \Z[\frac{1}{2}] \rightarrow \mathcal{K}_{Mp}$. After applying the Atkin--Lehner involution $W_{Mp}$ and $W_M$ to the two lines of (\ref{diagram_C_0_Mp}), we get a commutative diagram

\begin{center}
\begin{equation}\label{diagram_C_0_Mp_bis}
\begin{tikzcd}
H_1(X_1(Mp), C_{\infty}', \Z) \arrow[r, "\varpi_{Mp}"]\arrow[d, "\pi_1"] & \mathcal{K}_{Mp} \arrow[d, "\Norm"]  \\
H_1(X_1(M), C_{\infty}, \Z) \arrow[r, "\varpi_M"] &   \mathcal{K}_M
\end{tikzcd}
\end{equation}
\end{center}
where $C_{\infty}'$ (resp. $C_{\infty}$) is the set of cusps of $X_1(Mp)$ (resp. $X_1(M)$) in the same orbit as $\infty$. 
We have used the facts that $\varpi_{Mp} = \tilde{\Theta}_{Mp} \circ W_{Mp}$ and $\varpi_{M} = \tilde{\Theta}_{M} \circ W_{M}$. This follows from \cite[Proposition 4.3.3]{Sharifi_Venkatesh}, where the authors use usual Manin symbols (whereas our map $\varpi_M$ uses Manin symbols twisted by the Atkin--Lehner involution). 

Now, let $C$ be a subset of cusps of $X_1(Mp)$ as in Theorem \ref{main_thm}. If $C \subset C_{\infty}'$, then Theorem \ref{main_thm} follows from (\ref{diagram_C_0_Mp_bis}) (we just restrict $\varpi_{Mp}$ to $H_1(X_1(Mp), C, \Z) \subset H_1(X_1(Mp), C_{\infty}', \Z)$). Let us explain how to deduce the general case from this special case. Fix $c \in \mathbf{P}^1(\Q)$ such that $\Gamma_1(Mp)\cdot c \in C$. An element of $H_1(X_1(Mp), C, \Z)$ is of the form $\{c, \gamma c\}$ for some $\gamma \in \Gamma_0(Mp)$. The assumption that all the elements of $C$ are in the same diamond orbit under $\Ker((\Z/Mp\Z)^{\times} \rightarrow (\Z/M\Z)^{\times})$ means that we can actually choose $\gamma$ in $\Gamma_0(Mp) \cap \Gamma_1(M)$. 

We have 
\begin{align*}
 \{c, \gamma c\} &= \{c, \infty\} + \{\infty, \gamma \infty\} + \{\gamma \infty, \gamma c\} \\&
 =  \{\infty, \gamma \infty\}  + (\langle \gamma \rangle - 1)\cdot \{\infty, c \} \text{ ,}
 \end{align*}
where $\langle \gamma \rangle$ is the diamond operator associated with $\gamma$. Since $\gamma \in \Gamma_0(Mp) \cap \Gamma_1(M)$, we have $ \pi_2\left( (\langle \gamma \rangle - 1)\cdot \{\infty, c \}\right) = 0$. We also have $\Norm\left( \varpi_{Mp}((\langle \gamma \rangle - 1)\{\infty, c \} ) \right)$ in $ \mathcal{K}_M$ since $\varpi_{Mp}((\langle \gamma \rangle - 1)\{\infty, c \} )$ belongs to $(\langle \gamma \rangle - 1)\cdot \mathcal{K}_{Mp}$. Thus, we have:

\begin{align*}
\Norm(\varpi_{Mp}( \{c, \gamma c\} )) &= \Norm(\varpi_{Mp}( \{\infty, \gamma \infty\} )) \\&
= \varpi_M(\pi_2( \{\infty, \gamma \infty\} )) \\&
=\varpi_M(\pi_2( \{c, \gamma c\} )) \text{ .}
\end{align*}
This concludes the proof of Theorem \ref{main_thm} in the case $p \mid M$.

\subsection{The case $p \nmid M$}
Assume now that $p$ does not divide $M$. Note that in this case we have $(1, \zeta_M) \in U'$. There is an isomorphism $X\simeq \Spec(\Q(\zeta_{Mp})) \sqcup \Spec(\Q(\zeta_{M}))$ such that:
\begin{itemize}
\item the map $X \rightarrow U'$ is given by the two inclusions $(1, \zeta_M) \in U'$ and $(1, \zeta_{Mp}) \in U'$.
\item the map $X \rightarrow \Spec(\Q(\zeta_{M}))$ is  given by the canonical map $\Spec(\Q(\zeta_{Mp})) \rightarrow  \Spec(\Q(\zeta_{M}))$ and the map $\Spec(\Q(\zeta_{M})) \rightarrow  \Spec(\Q(\zeta_{M}))$ induced by the Galois automorphism $\sigma_p : \zeta_M \mapsto \zeta_M^{p}$.
 \end{itemize}
We thus get a commutative diagram

\begin{center}
\begin{tikzcd}
H^2(U', 2)/\Z \cdot \{-z_1, -z_2\} \arrow[r, "s_{Mp}^* \oplus s_M^*"] \arrow[d, "f_*"] & K_2(\Q(\zeta_{Mp}))/\Z \cdot \{-1, -\zeta_{Mp}\} \bigoplus K_2(\Q(\zeta_{M}))/\Z \cdot \{-1, -\zeta_{M}\}  \arrow[d, "\Norm \oplus (\sigma_p)_*"] \\
H^2(U, 2)/ \Z \cdot \{-z_1, -z_2\} \arrow[r, "s_M^*"] &  K_2(\Q(\zeta_M))/ \Z \cdot \{-1, -\zeta_M\}
\end{tikzcd}
\end{center}
In conclusion, if $p\nmid M$ we get $\Norm(\Theta_{Mp}(\gamma')) + (\sigma_p)_*(\Theta_M(\gamma')) = \Theta_M(\gamma)$. Since $(\sigma_p)_* = (\sigma_p^*)^{-1}$, we get
\begin{equation}\label{norm_eq_2}
\Norm(\Theta_{Mp}(\gamma')) = \Theta_M(\gamma) - (\sigma_p^*)^{-1}(\Theta_M(\gamma')) \text{ .}
\end{equation}

As in (\ref{diagram_C_0_Mp_bis}), we get a commutative diagram
\begin{center}
\begin{equation}\label{diagram_C_0_Mp_bis_2}
\begin{tikzcd}
H_1(X_1(Mp), C_{\infty}', \Z) \arrow[r, "\varpi_{Mp}"]\arrow[d, "\pi_1-\langle p \rangle \pi_2"] & \mathcal{K}_{Mp} \arrow[d, "\Norm"]  \\
H_1(X_1(M), C_{\infty}, \Z) \arrow[r, "\varpi_M"] &   \mathcal{K}_M
\end{tikzcd}
\end{equation}
\end{center}
(note that both $\pi_1$ and $\langle p \rangle \pi_2$ send $C_{\infty}'$ to $C_{\infty}$, so that this diagram makes sense). 
An argument identical to the one when $p\mid M$ shows that the diagram of Theorem \ref{main_thm} commutes. This concludes the proof of Theorem \ref{main_thm}.

\subsection{Proof of Theorem \ref{thm_Atkin_op}}
Let us now prove Theorem \ref{thm_Atkin_op}. Let $M \geq 4$ and $p \geq 5$ be a prime. One needs to prove that $\varpi_M \otimes \Z_p : H_1(X_1(M), \Z_p) \rightarrow \mathcal{K}_M \otimes \Z_p$ is annihilated by the operator $U_{\ell}-1$ for any prime $\ell \mid M$. If $p \mid M$, then this is a result of Fukaya--Kato \cite[Theorem 4.3.6]{Sharifi_Venkatesh}. Note that the result of Fukaya--Kato involves dual Hecke operators, but since we use modified Manin symbols our statement involving classical Hecke operators is equivalent. Let us thus assume that $p\nmid M$. By Theorem \ref{main_thm} (\ref{main_thm_ii}), we have a commutative diagram

\begin{center}
\begin{equation}\label{diagram_tensor_Z_p}
\begin{tikzcd}
H_1(X_1(Mp), \Z_p) \arrow[r, "\varpi_{Mp} \otimes \Z_p"]\arrow[d, "\pi_1-\langle p \rangle \pi_2"] & \mathcal{K}_{Mp} \otimes \Z_p \arrow[d, "\Norm"]  \\
H_1(X_1(M), \Z_p) \arrow[r, "\varpi_M \otimes \Z_p"] &   \mathcal{K}_M \otimes \Z_p
\end{tikzcd}
\end{equation}
\end{center}

By the result of Fukaya--Kato, one knows that $\varpi_{Mp} \otimes \Z_p$ is annihilated by the Hecke operator $U_{\ell}-1$. Since $\pi_1-\langle p \rangle \pi_2$ commutes with the action of $U_{\ell}-1$ on both sides, it suffices to prove that $\pi_1-\langle p \rangle \pi_2 : H_1(X_1(Mp), \Z_p) \rightarrow H_1(X_1(M), \Z_p)$ is surjective, which by Nakayama Lemma is equivalent to the surjectivity $\pi_1-\langle p \rangle \pi_2 : H_1(X_1(Mp), \Z/p\Z) \rightarrow H_1(X_1(M), \Z/p\Z)$. By intersection duality, it suffices to prove that $\pi_1^*- \pi_2^* \circ \langle p \rangle^{-1} : H_1(X_1(M), \Z/p\Z) \rightarrow H_1(X_1(Mp), \Z/p\Z)$ is injective. Note that $H_1(X_1(M), \Z/p\Z)$ is canonically isomorphic to the parabolic cohomology $H^1_p(\Gamma_1(M), \Z/p\Z)$, \ie the subgroup of $H^1(\Gamma_1(M), \Z/p\Z)$ consisting of classes of cocycles which are coboundaries when restricted to stabilizers of cusps. Thus, it is enough for us to prove that $\pi_1^*- \pi_2^* \circ \langle p \rangle^{-1} : H^1(\Gamma_1(M), \Z/p\Z) \rightarrow H^1(\Gamma_1(Mp), \Z/p\Z)$ is injective. This follows from \cite[Lemma 1]{Edixhoven}, which says that the map $H^1(\Gamma_1(M), \Z/p\Z)^2 \xrightarrow{\pi_1^*+ \pi_2^*} H^1(\Gamma_1(Mp), \Z/p\Z)$ is injective.

\bibliography{biblio}

\begin{thebibliography}{1}

\bibitem{Edixhoven}
Bas Edixhoven and Chandrashekhar Khare.
\newblock Hasse invariant and group cohomology.
\newblock {\em Doc. Math.}, 8:43--50, 2003.

\bibitem{Sharifi_survey}
Takako Fukaya, Kazuya Kato, and Romyar Sharifi.
\newblock Modular symbols in {I}wasawa theory.
\newblock In {\em Iwasawa theory 2012}, volume~7 of {\em Contrib. Math. Comput.
  Sci.}, pages 177--219. Springer, Heidelberg, 2014.

\bibitem{Fukaya_Kato}
Takako Fukaya and Kato Kazuya.
\newblock On conjectures of {S}harifi.
\newblock In {\em preprint}.

\bibitem{BSD_Eisenstein}
Emmanuel Lecouturier and Jun Wang.
\newblock On the {B}irch and {S}winnerton-{D}yer conjecture for certain
  families of abelian varieties with a rational isogeny.
\newblock In {\em preparation}.

\bibitem{LW}
Emmanuel Lecouturier and Jun Wang.
\newblock On a conjecture of {S}harifi and {M}azur's {E}isenstein ideal.
\newblock {\em Int. Math. Res. Not. IMRN}, (1):391--421, 2022.

\bibitem{Sharifi_annals}
Romyar Sharifi.
\newblock A reciprocity map and the two-variable {$p$}-adic {$L$}-function.
\newblock {\em Ann. of Math. (2)}, 173(1):251--300, 2011.

\bibitem{Sharifi_Venkatesh}
Romyar Sharifi and Akshay Venkatesh.
\newblock Eisenstein cocycles in motivic cohomology.
\newblock In {\em preprint, https://arxiv.org/abs/2011.07241}.

\bibitem{Tate_K2}
John Tate.
\newblock Relations between {$K_{2}$} and {G}alois cohomology.
\newblock {\em Invent. Math.}, 36:257--274, 1976.

\bibitem{Scott}
R.~Scott Williams.
\newblock Level compatibility in the passage from modular symbols to cup
  products.
\newblock {\em Res. Number Theory}, 7(1):Paper No. 9, 26, 2021.

\end{thebibliography}
\bibliographystyle{plain}
\newpage

\end{document}